\documentclass[a4paper, 11pt]{amsart}
\pdfoutput = 1

\usepackage{amsfonts,amsmath,amssymb,mathtools,microtype}
\usepackage{geometry}
\geometry{margin=1.09in}
\usepackage{amsthm}
\usepackage[hyphens]{url} 
\usepackage[linktoc = all, hidelinks, colorlinks, unicode=true]{hyperref} 
\usepackage[capitalise, compress, nameinlink, noabbrev]{cleveref} 
\hypersetup{linkcolor={blue!70!black}, citecolor={green!70!black}, urlcolor={blue!70!black}}
\usepackage[usenames, dvipsnames]{xcolor}
\usepackage{enumitem}
\usepackage{comment}
\usepackage{microtype}
\usepackage[UKenglish]{isodate}
\cleanlookdateon

\newtheorem{theorem}{Theorem}
\newtheorem{definition}[theorem]{Definition}
\newtheorem{lemma}[theorem]{Lemma}
\newtheorem{corollary}[theorem]{Corollary}
\newtheorem{question}[theorem]{Question}

\newtheorem{proposition}[theorem]{Proposition}
\newtheorem{conjecture}[theorem]{Conjecture}

\renewcommand{\ge}{\geqslant}
\renewcommand{\geq}{\geqslant}
\renewcommand{\le}{\leqslant}
\renewcommand{\leq}{\leqslant}
\renewcommand{\emptyset}{\varnothing}

\DeclareMathOperator{\KG}{\textup{\textsf{KG}}}
\DeclareMathOperator{\SG}{\textup{\textsf{SG}}}
\DeclareMathOperator{\gap}{\textup{\textsf{gap}}}
\DeclarePairedDelimiter{\size}{\lvert}{\rvert}
\DeclarePairedDelimiter{\ceil}{\lceil}{\rceil}
\DeclarePairedDelimiter{\floor}{\lfloor}{\rfloor}

\newcommand{\chif}{\chi_f}
\newcommand{\prob}{\mathbb{P}}
\newcommand{\expe}{\mathbb{E}}

\newcommand{\cO}{\mathcal{O}}

\newcommand{\defn}[1]{\textcolor{Maroon}{\emph{#1}}}


\author[Gir\~{a}o]{Ant\'{o}nio Gir\~{a}o}
\address[Gir\~{a}o, Michel, Savery]{Mathematical Institute, University of Oxford, UK}
\email{\{girao,michel,savery\}@maths.ox.ac.uk}

\author[Hendrey]{Kevin Hendrey}
\address[Hendrey]{Institute for Basic Science, Daejeon, South Korea}
\email{kevinhendrey@ibs.re.kr}

\author[Illingworth]{Freddie Illingworth}
\address[Illingworth]{Mathematical Institute, University of Oxford, UK}
\curraddr{Department of Mathematics, University College London, UK}
\email{f.illingworth@ucl.ac.uk}

\author[Lehner]{Florian Lehner}
\address[Lehner]{Department of Mathematics, University of Auckland, New Zealand}
\email{florian.lehner@auckland.ac.nz \vspace{-8pt}}

\author[Michel]{Lukas Michel}

\author[Savery]{Michael Savery}
\address[Savery]{Heilbronn Institute for Mathematical Research, Bristol, UK}

\author[Steiner]{Raphael Steiner}
\address[Steiner]{Institute of Theoretical Computer Science, ETH Z\"{u}rich, Switzerland}
\email{raphaelmario.steiner@inf.ethz.ch}

\date{\today}

\begin{document}
\title{Chromatic number is not tournament-local}
\thanks{A.G.\ and F.I.\ were supported by EPSRC grant EP/V007327/1. F.I.\ and F.L.\ were supported by MATRIX-Simons travel grants. L.M.\ and M.S.\ were supported by MATRIX travel grants. R.S.\ was supported by an ETH Z\"{u}rich Postdoctoral Fellowship. K.H.\ was supported by the Institute for Basic Science (IBS-R029-C1).}

\begin{abstract}
    Scott and Seymour conjectured the existence of a function $f \colon \mathbb{N} \to \mathbb{N}$ such that, for every graph $G$ and tournament $T$ on the same vertex set, $\chi(G)\ge f(k)$ implies that $\chi(G[N_T^+(v)])\ge k$ for some vertex $v$. In this note we disprove this conjecture even if $v$ is replaced by a vertex set of size $\cO(\log{\size{V(G)}})$. As a consequence, we answer in the negative a question of Harutyunyan, Le, Thomass\'{e}, and Wu concerning the corresponding statement where the graph $G$ is replaced by another tournament, and disprove a related conjecture of Nguyen, Scott, and Seymour. We also show that the setting where chromatic number is replaced by degeneracy exhibits a quite different behaviour.
\end{abstract}

\maketitle

\section{Introduction}

\noindent The question of what structures must appear in graphs of large chromatic number is one of the most fundamental in modern graph theory. One obvious reason for a graph to have high chromatic number is the presence of a large clique, but constructions from the 1940s and 50s of, for example, Tutte~\cite{Desc54} and Zykov~\cite{Z49} demonstrate the existence of triangle-free graphs of arbitrarily large chromatic number. In particular, there are graphs with arbitrarily large chromatic number in which every neighbourhood is independent (and hence $1$-colourable).

Berger, Choromanski, Chudnovsky, Fox, Loebl, Scott, Seymour, and Thomass\'{e}~\cite{BERGER20131} conjectured that the analogous phenomenon does not occur in tournaments. This was confirmed recently in a beautiful paper of Harutyunyan, Le, Thomass\'{e}, and Wu~\cite{tournaments} in which they showed that for every $k$ there exists an $f(k)$ such that every tournament with chromatic number\footnote{The \defn{chromatic number}, $\chi(T)$, of a tournament $T$ is the least $k$ for which there is a partition of $V(T)$ into $k$ parts each of which induces a transitive (acyclic) subtournament of $T$.} at least $f(k)$ contains a vertex $v$ such that $\chi(T[N^{+}(v)])\geq k$. 

Separately, Scott and Seymour~\cite{barbados} (see also \cite[Conj.~7]{tournaments}) conjectured a similar result for a graph and a tournament on the same vertex set. 

\begin{conjecture}[Scott and Seymour]\label{conj:SS}
    For every positive integer $k$ there exists a $\chi$ such that, for every graph $G$ with $\chi(G) \ge \chi$ and every tournament $T$ on the same vertex set, there is a vertex $v$ such that $\chi(G[N_T^+(v)])\ge k$. 
\end{conjecture}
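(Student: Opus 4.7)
To disprove \cref{conj:SS}, the plan is to construct, for some fixed small integer $k$ (for concreteness, $k = 3$), an infinite family of pairs $(G_n, T_n)$ on the same vertex set with $\chi(G_n) \to \infty$ but $\chi(G_n[N_{T_n}^+(v)]) < k$ for every $v \in V(G_n)$. A purely probabilistic attempt — random graph paired with a random tournament — seems hopeless, because out-neighbourhoods in a random tournament have linear size and typically inherit the chromatic number of $G$ up to a constant factor. Hence the construction must be highly structured.

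The intended approach is an iterative blow-up. A natural base case for $k = 3$ is $(G_0, T_0) = (K_4, T_0)$ where $T_0$ is a tournament of maximum out-degree $2$: then $\chi(G_0) = 4$ while $\chi(G_0[N_{T_0}^+(v)]) \leq 2$ for every $v$. Given $(G_m, T_m)$, one would define $(G_{m+1}, T_{m+1})$ by replacing each vertex $u$ of $G_m$ with a copy $Q_u$ carrying the same structure, setting all inter-copy tournament arcs from $Q_u$ to $Q_w$ whenever $u \to_{T_m} w$, and adding inter-copy graph edges according to a carefully chosen sparse pattern (e.g.\ a system of matchings or bipartite ``shift'' structure). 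The aim is to force $\chi(G_{m+1}) > \chi(G_m)$ by the presence of these inter-copy edges, while the tournament orientation guarantees that every out-neighbourhood in $T_{m+1}$ projects onto a single old out-neighbourhood plus bounded-chromatic fragments from each copy, so that the induction hypothesis yields $\chi(G_{m+1}[N_{T_{m+1}}^+(v)]) < k$.

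For the stronger disproof in which $v$ is replaced by a vertex set of size $\cO(\log |V(G)|)$, the construction must be robust under taking common out-neighbourhoods. A natural idea is to design the recursion so that each vertex added to a set $S$ effectively peels off one ``coordinate'' of the construction; after $\log_2 |V(G)|$ such peels, the remaining common out-neighbourhood sits inside a bounded-chromatic base case. This matches the exponential growth of $|V(G_m)|$ with the recursion depth.

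The principal obstacle is the tension between chromatic-number growth and boundedness of out-neighbourhood chromatic numbers: inter-copy edges are needed to raise $\chi(G_{m+1})$, yet those same edges risk creating large-chromatic induced subgraphs in some out-neighbourhood. The heart of the construction is identifying an inter-copy edge pattern, paired with the inter-copy tournament orientation, so that every out-neighbourhood meets the extra edges only in a structured, low-chromatic way — reminiscent of classical constructions of triangle-free graphs of large chromatic number, but intertwined with a tournament so as to drive out-neighbourhood chromatic numbers down.
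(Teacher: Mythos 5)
Your proposal is a plan rather than a proof, and the place where it stops is exactly the crux of the problem. The inductive step is never specified: the ``carefully chosen sparse pattern'' of inter-copy edges, together with a verification that it simultaneously (a) forces $\chi(G_{m+1}) > \chi(G_m)$ and (b) keeps every out-neighbourhood of $T_{m+1}$ at chromatic number below $k$, is precisely the content of the theorem, and you acknowledge in your final paragraph that you do not know how to do it. Worse, the recursion as sketched has a concrete flaw. If each copy $Q_u$ carries the graph structure of $G_m$ and all inter-copy arcs go from $Q_u$ to $Q_w$ whenever $u \to_{T_m} w$, then for any vertex $v \in Q_u$ the set $N_{T_{m+1}}^+(v)$ contains the \emph{entire} copy $Q_w$ for every $w \in N_{T_m}^+(u)$; since $Q_w$ induces a copy of $G_m$, we get $\chi(G_{m+1}[N_{T_{m+1}}^+(v)]) \geq \chi(G_m) \to \infty$, so the out-neighbourhoods are not ``a single old out-neighbourhood plus bounded-chromatic fragments'' --- they swallow whole copies. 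If instead the copies carry no graph edges, then nothing in the sketch certifies that the inter-copy edges alone push the chromatic number up while respecting (b), and the claimed induction hypothesis (``same structure in each copy'') no longer makes sense. The ``peel one coordinate per vertex of $U$'' idea for sets of size $\cO(\log N)$ inherits the same unverified core.

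For comparison, the paper avoids any recursive blow-up. It takes $G = \SG(2k+\chi-2, k)$, a Schrijver graph of chromatic number exactly $\chi$ (and triangle-free), and observes that each vertex $S$ is determined by its small ``gap'' set $\gap(S) \in \binom{[n]}{t}$ with $t = \chi - 2$. The tournament is then built from a classical Erd\H{o}s tournament on $[n]$ in which every set of at most $\log(n)/2$ vertices is dominated by an outside vertex: orienting $S_1 \to S_2$ when the dominators of $\gap(S_1)$ avoid $\gap(S_2)$ guarantees that for any small set $U$ of vertices there is a single element $r \in [n]$ absent from the gap of every out-neighbour, so every such out-neighbour contains $r$ or $r+1$ and the induced subgraph is bipartite. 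The chromatic lower bound is inherited for free from Lov\'asz--Schrijver topology rather than being engineered level by level, which is what lets the argument go through where the blow-up tension you identified remains unresolved.
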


This conjecture is supported by the observation~\cite{barbados} that the statement holds when chromatic number is replaced by fractional chromatic number (see \cref{sec:frac_chrom} for more details).
The main result of this note is a disproof of \cref{conj:SS} for $k\geq 3$. In fact, we prove something stronger: $G$ and $T$ may be chosen such that the out-neighbourhood\footnote{The \defn{out-neighbourhood}, $N^+(S)$, of a set $S$ is $\bigcup_{v \in S} N^+(v)$. This might contain vertices of $S$.} of any set of size at most $\frac{\log{\size{V(T)}}}{2\chi^2}$ is bipartite.

\begin{theorem}\label{thm:countex}
    For every positive integer $\chi$ there are arbitrarily large $N$ for which there is a graph $G$ and a tournament $T$ on the same $N$-vertex set such that $\chi(G) = \chi$ and, for every set $U$ of at most $\frac{\log{N}}{2 \chi^2}$ vertices, $\chi(G[N_T^+(U)]) \leq 2$.
\end{theorem}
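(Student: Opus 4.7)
My plan is to construct $G$ and $T$ jointly, rather than choosing one first and then the other. Before giving an approach, let me note why the most obvious randomised construction cannot work. If one fixes $G$ to be a $\chi$-chromatic graph of large odd girth (\`a la Erd\H{o}s) and lets $T$ be a uniformly random tournament, then for any odd cycle $C$ of $G$ disjoint from $U$ one has
\[
    \prob\bigl[C \subseteq N_T^+(U)\bigr] = (1-2^{-\size{U}})^{\size{C}} \leq \exp\bigl(-\size{C} \cdot 2^{-\size{U}}\bigr),
\]
because the events $\{v \in N_T^+(U)\}_{v \notin U}$ depend on disjoint edges of $T$ and are independent. For a union bound over $\binom{N}{\leq s_0} \leq N^{s_0}$ sets $U$ (with $s_0 = \log{N}/(2\chi^2)$) to succeed, one would need $G$ to have odd girth of order $N^{1/(2\chi^2)} \log^2{N}$, which for fixed $\chi$ and large $N$ is incompatible with $\chi(G) = \chi$ (the odd girth of a $\chi$-chromatic graph is only $O(\log{N})$). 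So the construction must exploit structure, not just randomness.

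The approach I would take is therefore to engineer $G$ and $T$ together on a structured vertex set. A natural template is $V = [\chi] \times [N/\chi]$ with parts $V_1, \dotsc, V_\chi$, and $G$ chosen so that $G[S]$ is bipartite whenever $S$ meets at most two parts (the complete $\chi$-partite graph is the extreme case). The goal is then to design $T$ so that for every $U$ with $\size{U} \leq s_0$, the out-neighbourhood $N_T^+(U)$ meets at most two of the parts; equivalently, the set $D(U) = V \setminus N_T^+(U)$ of vertices that beat every element of $U$ should contain at least $\chi - 2$ entire parts. The bound $s_0 = \log{N}/(2\chi^2)$ strongly suggests a binary/doubling structure: the $\chi^2$ denominator matches the $\binom{\chi}{2}$ choices of ``two-part window'' to which $N_T^+(U)$ should be confined, while the $\log{N}$ numerator is the coding capacity available once one fixes such a window. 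Concretely, one would label the vertices of each $V_i$ by binary strings and orient $T$ based on these labels so that for every small $U$ there is a pair $\{a,b\}$ for which $D(U) \supseteq V \setminus (V_a \cup V_b)$.

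The main obstacle is precisely designing such a $T$ for the complete $\chi$-partite $G$, since the required ``$\chi - 2$ entire parts above $U$'' is extremely rigid: a uniformly random $T$ has $\prob[V_i \subseteq D(U)] = 2^{-\size{U} \size{V_i}}$, which is far too small, so one cannot simply union-bound. To circumvent this, I would allow $G$ to be sparser than complete multipartite (still with $\chi(G) = \chi$) so that $G[N_T^+(U)]$ can be bipartite even when $N_T^+(U)$ meets more than two parts, as long as it contains no odd cycle of $G$. One then reduces the problem to ensuring that $D(U)$ hits every odd cycle of $G$, and the task becomes choosing $G$ with ``few and well-spread'' odd cycles matched to a tournament $T$ whose $D(U)$ sets form a rich family of odd-cycle covers. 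Verifying this matching --- for every $U$ of size up to $\log{N}/(2\chi^2)$ simultaneously --- is the heart of the proof and where I would expect all of the work to lie.
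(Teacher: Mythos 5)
This is a plan, not a proof: the step you yourself identify as ``the heart of the proof'' --- actually exhibiting a $\chi$-chromatic $G$ together with a tournament $T$ such that $D(U)=V\setminus N_T^+(U)$ hits every odd cycle of $G$ for \emph{every} $U$ of size up to $\log N/(2\chi^2)$ --- is exactly what is missing, and nothing in the proposal indicates how to carry it out. The multipartite template you start from is, as you note, too rigid, and the fallback (``choose $G$ with few and well-spread odd cycles matched to a tournament whose $D(U)$ sets form a rich family of odd-cycle covers'') is not a construction. The paper's solution is to take $G=\SG(2k+\chi-2,k)$, so that each vertex $S$ is determined by its gap set $\gap(S)$, a $t$-subset of a ground set $[n]$ with $t=\chi-2$ and $n\approx N^{1/t}$; one then takes an $n$-vertex tournament with domination number exceeding $\log(n)/2$ (Erd\H{o}s) to build a map $f$ on $\binom{[n]}{t}$ such that any $\le\frac{\log n}{2t}$ sets have a common element in the intersection of their $f$-images, and orients $S_1\to S_2$ whenever $f(\gap(S_1))\cap\gap(S_2)=\emptyset$. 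For any small $U$ there is then a single $r\in[n]$ such that every out-neighbour $S$ meets $\{r,r+1\}$, and bipartiteness is automatic because two Kneser/Schrijver vertices sharing an element are non-adjacent. The two ideas your plan lacks are precisely these: indexing the $N$ vertices of $G$ by small subsets of an exponentially smaller ground set (which is what lets a $\Theta(\log N)$-sized $U$ be absorbed by a single dominating element), and getting the $2$-colouring for free from non-adjacency of intersecting sets rather than from covering odd cycles.

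A secondary point: your argument for why the na\"ive random construction fails is not correct as stated. The odd girth of a $\chi$-chromatic graph is not $O(\log N)$ --- the odd cycle $C_N$ is $3$-chromatic with odd girth $N$, and Borsuk-type graphs are $4$-chromatic with odd girth $\Theta(\sqrt N)$ --- so the claimed incompatibility does not follow for small $\chi$ (and for the random approach the real obstruction is rather the union bound over the many long odd cycles, not just the shortest one). This does not affect the main verdict, which is that the construction at the core of the theorem is absent.
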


We will show that $G$ can in fact be taken to be triangle-free which will be useful for our proof of \cref{cor:T1_T2}. We make two remarks concerning the optimality of \cref{thm:countex}.
\begin{itemize}[noitemsep]
    \item It is not possible to replace 2 by 1 in the bound on the chromatic number of the out-neighbourhood, even when $U$ consists of a single vertex. Indeed, suppose that $G[N_T^+(v)]$ is independent for every vertex $v$. Let $xy$ be an edge of $G$. No out-neighbourhood of a vertex of $T$ can contain both $x$ and $y$, so $\{x, y\}$ dominates $T$. But then $G$ is 3-colourable: one colour for each of $N_T^+(x)$ and $N_T^+(y)$, and a final colour for whichever of $x$ and $y$ has not been coloured.
    
    \item The bound on the size of $U$ is very close to being best possible. Let $S$ be a dominating set of $T$ of size at most $\ceil{\log_2{N}}$ (such a set can be constructed greedily). Then $N^+(S)$ contains all vertices of $G$ except perhaps one and so, for any $0 \le \ell \le \chi - 2$, there is some $U \subseteq S$ of size at most $\ceil{\log_2(N)/\floor{\frac{\chi - 2}{\ell}}}$ with $\chi(G[N_T^+(U)]) > \ell$.
\end{itemize}

\Cref{thm:countex} has the following corollary, which resolves in a strong sense a question of Harutyunyan, Le, Thomass\'{e}, and Wu~\cite{tournaments} concerning the analogous problem for two tournaments on the same vertex set.

\begin{corollary}\label{cor:T1_T2}
    For every positive integer $\chi$ there are arbitrarily large $N$ for which there are tournaments $T_1$ and $T_2$ on the same $N$-vertex set such that $\chi(T_1) = \chi$ and, for every set $U$ of at most $\frac{\log{N}}{8 \chi^2}$ vertices, $\chi(T_1[N_{T_2}^+(U)])\le 2$. 
\end{corollary}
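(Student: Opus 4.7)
The plan is to reduce \cref{cor:T1_T2} to \cref{thm:countex} via a doubling construction that converts the graph $G$ into a tournament. First apply \cref{thm:countex} with $2\chi$ in place of $\chi$, using the strengthening noted in the paper that $G$ can be taken triangle-free: this produces a triangle-free graph $G$ and a tournament $T$ on a common $N$-vertex set $V$ with $\chi(G) = 2\chi$ and $\chi(G[N_T^+(U)]) \leq 2$ for every $U \subseteq V$ with $\size{U} \leq \frac{\log N}{8\chi^2} = \frac{\log N}{2\,(2\chi)^2}$.

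Both $T_1$ and $T_2$ will live on the doubled vertex set $V' := V \times \{1,2\}$. I build $T_2$ by placing a copy of $T$ on each of the levels $V \times \{1\}$ and $V \times \{2\}$, orienting each cross edge between distinct vertices in agreement with $T$, and fixing the self-pair $\{(v,1),(v,2)\}$ in a convenient direction. Under this construction, for any $U \subseteq V'$, letting $U^{*} := \{v \in V : (v,1) \in U \text{ or } (v,2) \in U\}$, one has $\size{U^{*}} \leq \size{U}$ and $N_{T_2}^{+}(U) \subseteq N_T^{+}(U^{*}) \times \{1,2\}$, up to a small absorption contributed by the self-pair edges.

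I then construct $T_1$ as a tailored doubling tournamentisation of the triangle-free graph $G$: orient each level transitively with respect to a fixed linear order on $V$, orient $(v,1) \to (v,2)$ for every $v$, and orient each cross edge between distinct vertices based on whether the underlying pair is an edge of $G$. The cross-edge convention is chosen so that
\begin{enumerate}[label=(\roman*)]
    \item $\chi(T_1) = \chi$; and
    \item whenever $A \subseteq V$ has $G[A]$ bipartite, $\chi(T_1[A \times \{1,2\}]) \leq 2$.
\end{enumerate}
The triangle-freeness of $G$ is essential for (i): the doubling should halve the chromatic number, so that $\chi(G) = 2\chi$ translates to $\chi(T_1) = \chi$, with the upper bound arising from a grouping of the $2\chi$ colour classes of $G$ into $\chi$ pairs that are each realised as (a bounded number of) transitive subtournaments of $T_1$, and the lower bound from a case analysis---essentially using triangle-freeness---of how the four doubled vertices $(u,1),(u,2),(v,1),(v,2)$ associated to each $G$-edge $uv$ can lie in transitive subtournaments of any partition of $V(T_1)$. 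Property (ii) is built into the cross-edge orientations so that every bipartition $X \sqcup Y$ of an induced bipartite subgraph $G[A]$ yields transitive subtournaments of the form $X \times \{1\} \cup Y \times \{2\}$ and $Y \times \{1\} \cup X \times \{2\}$ of $T_1[A \times \{1,2\}]$.

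Given (i) and (ii), the corollary follows: for $U \subseteq V'$ with $\size{U} \leq \frac{\log N}{8\chi^2}$, $\size{U^{*}} \leq \size{U}$ and so by \cref{thm:countex} the graph $G[N_T^{+}(U^{*})]$ is bipartite; by (ii), $\chi(T_1[N_T^{+}(U^{*}) \times \{1,2\}]) \leq 2$; and the containment $N_{T_2}^{+}(U) \subseteq N_T^{+}(U^{*}) \times \{1,2\}$ yields $\chi(T_1[N_{T_2}^{+}(U)]) \leq 2$, with $\chi(T_1) = \chi$ by (i). The principal obstacle is the construction of $T_1$ satisfying both (i) and (ii) simultaneously: the cross-edge orientations must encode the structure of $G$ richly enough to force $\chi(T_1) \geq \chi$ (where triangle-freeness enters crucially) while remaining compatible with every bipartition of every induced bipartite subgraph of $G$.
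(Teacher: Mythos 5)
There is a genuine gap, and it sits exactly where you place ``the principal obstacle'': the tournament $T_1$ is never actually constructed. Properties (i) and (ii) are where all the content of the corollary lies, and asserting that ``the cross-edge convention is chosen so that'' they hold is not a proof. Moreover, it is not clear that any convention of the kind you describe (orienting the cross pair $(u,1),(v,2)$ according to whether $uv \in E(G)$, possibly together with a linear order) can satisfy (ii). For instance, with the natural rule ``$(u,1)\to(v,2)$ if $uv\in E(G)$, otherwise orient by the order'', take $A=\{x_1,x_2,y\}$ with the single edge $x_1y$ and the order $y\prec x_2\prec x_1$; then $X=\{x_1,x_2\}$, $Y=\{y\}$ is a legitimate bipartition of the bipartite graph $G[A]$, yet $(x_1,1)\to(y,2)\to(x_2,1)\to(x_1,1)$ is a cyclic triangle inside $X\times\{1\}\cup Y\times\{2\}$, so that set is not transitive. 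The lower bound in (i) (that the doubling exactly halves the chromatic number, ``essentially using triangle-freeness'') is likewise only sketched as a case analysis that is not carried out. Two further problems with the reduction as set up: the self-pair arcs mean $N_{T_2}^+(U)$ is \emph{not} contained in $N_T^+(U^*)\times\{1,2\}$, and the up to $\size{U}$ extra partner vertices cannot simply be ``absorbed''---each added vertex can increase the chromatic number of a tournament by one, and the statement demands the bound $2$ exactly; and since your ground set has $2N$ vertices, the sets $U$ you must handle have size up to $\frac{\log(2N)}{8\chi^2}$, slightly more than the $\frac{\log N}{8\chi^2}$ that \cref{thm:countex} (applied with parameter $2\chi$) licenses for the projection $U^*$.

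For comparison, the paper's proof avoids the doubling entirely and with it all of these issues: it keeps $T_1$ on the \emph{same} vertex set as $G$, taking $T_1$ to be the tournament whose back-edges with respect to an arbitrary linear order $\prec$ are exactly the edges of $G$, and sets $T_2=T$. Triangle-freeness then shows that any path $v_1v_2v_3$ of $G$ with $v_1\prec v_2\prec v_3$ is a cyclic triangle of $T_1$, whence every transitive set $W$ of $T_1$ induces a bipartite subgraph of $G$ (two-colour $W$ by whether a vertex has an earlier $G$-neighbour in $W$), giving $\chi(T_1)\ge\chi(G)/2=\chi$; conversely, every independent set of $G$ is transitive in $T_1$, so a bipartition of $G[N_{T_2}^+(U)]$ gives $\chi(T_1[N_{T_2}^+(U)])\le 2$. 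Your idea of lifting a bipartition to two transitive classes is precisely this second half of the argument; the fix is to implement it on the original vertex set rather than trying to encode $G$ into cross-edges of a doubled tournament.
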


In turn, \cref{cor:T1_T2} has the following immediate consequence which disproves a conjecture of Nguyen, Scott, and Seymour~\cite[3.4]{NSS23}.

\begin{corollary}\label{cor:bipartite}
    For every positive integer $\chi$ there are arbitrarily large $N$ for which there is an $N$-vertex tournament $T$ and disjoint subsets $A, B \subseteq V(T)$ such that $\chi(T[A]), \chi(T[B]) \geq \chi$ and the following holds. For all $A' \subseteq A$ and $B' \subseteq B$ of size at most $\frac{\log{N}}{32 \chi^2}$, both $\chi(A \cap N^{+}(B'))$ and $\chi(B\cap N^{+}(A'))$ are at most $2$.
\end{corollary}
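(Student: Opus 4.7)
The plan is to derive \cref{cor:bipartite} from \cref{cor:T1_T2} by a straightforward ``two-copies'' construction. First I would apply \cref{cor:T1_T2} with parameter $\chi$ to obtain tournaments $T_1, T_2$ on a common vertex set $V$ with $\size{V} = N_0 = N/2$, $\chi(T_1) = \chi$, and $\chi(T_1[N_{T_2}^+(U)]) \le 2$ for every $U \subseteq V$ of size at most $\frac{\log N_0}{8\chi^2}$. I would then take disjoint copies $A = \{a_v : v \in V\}$ and $B = \{b_v : v \in V\}$ of $V$ (so $\size{A \sqcup B} = N$) and define the target tournament $T$ by setting $T[A] \cong T[B] \cong T_1$ via the natural identifications $a_v \leftrightarrow v \leftrightarrow b_v$, orienting each cross-edge $a_u \to b_v$ (with $u \ne v$) iff $u \to_{T_2} v$, and orienting the ``diagonal'' edges $\{a_v, b_v\}$ by a fixed convention such as $a_v \to b_v$. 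By construction $\chi(T[A]) = \chi(T[B]) = \chi$.

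For the chromatic bound on cross-neighborhoods, fix $B' \subseteq B$ with $\size{B'} \le \frac{\log N}{32\chi^2}$ and let $U = \{v : b_v \in B'\}$. Since $\log N = \log(2N_0) \le 4\log N_0$ for $N_0 \ge 2$, we have $\size{U} \le \frac{\log N_0}{8\chi^2}$, so \cref{cor:T1_T2} applies. Unpacking the construction with the convention $a_v \to b_v$, $A \cap N_T^+(B') = \{a_v : v \in N_{T_2}^+(U)\}$, which under the identification $a_v \leftrightarrow v$ is exactly $T_1[N_{T_2}^+(U)]$, hence has chromatic number at most $2$ by \cref{cor:T1_T2}. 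The analogous computation for $A' \subseteq A$ yields $B \cap N_T^+(A') = \{b_v : v \in U' \cup N_{T_2}^+(U')\}$ for the corresponding $U' \subseteq V$.

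The main technical hurdle is the diagonal contribution in the second computation: the extra set $U' \setminus N_{T_2}^+(U')$ may contain a single vertex (the unique $T_2$-dominating vertex of $U'$ if one exists), which naively raises the chromatic bound from $2$ to $3$. To secure the sharp bound of $2$, one must either choose the diagonal orientations more cleverly---for example, in a way that allows this single ``king'' vertex to extend the $2$-partition of $T_1[N_{T_2}^+(U')]$ provided by \cref{cor:T1_T2}---or establish a mild strengthening $\chi(T_1[U \cup N_{T_2}^+(U)]) \le 2$ of \cref{cor:T1_T2} and apply it directly, which is plausible since the extension amounts to inserting a single highly-structured vertex into an existing $2$-colouring.
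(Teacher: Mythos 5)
You have correctly reduced the problem to \cref{cor:T1_T2}, but your two-disjoint-copies construction has a genuine, unresolved gap, which you yourself flag: the diagonal edges. With the convention $a_v \to b_v$, for $A' \subseteq A$ you get $B \cap N_T^+(A') = \{b_v : v \in U' \cup N_{T_2}^+(U')\}$, and the possible extra vertex of $U' \setminus N_{T_2}^+(U')$ (the vertex of $U'$ beating all others in $T_2$) only yields the bound $3$, not $2$; reversing or mixing the diagonal convention just relocates the same problem to whichever side a diagonal points into. Neither of your proposed fixes is carried out, and the second one, the strengthening $\chi(T_1[U \cup N_{T_2}^+(U)]) \le 2$, is not available as a black box: adding a single vertex to a $2$-colourable tournament can force chromatic number $3$, and in the actual construction behind \cref{cor:T1_T2} the $2$-colouring of $T_1[N_{T_2}^+(U)]$ comes from a bipartition of the triangle-free graph $G$ restricted to $N_{T_2}^+(U)$, with no guarantee that the extra vertex can be absorbed into either transitive class. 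So as written your argument proves the statement only with $3$ in place of $2$.

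The paper sidesteps the diagonal issue by never duplicating the vertex set. Apply \cref{cor:T1_T2} with parameter $2\chi$ (this is exactly where the constant $32\chi^2 = 8(2\chi)^2$ comes from), obtaining $T_1, T_2$ on a common $N$-vertex set $V$ with $\chi(T_1) = 2\chi$ and $\chi(T_1[N_{T_2}^+(U)]) \le 2$ whenever $\size{U} \le \frac{\log N}{32\chi^2}$. Since chromatic number is subadditive over vertex partitions, $V$ can be split into $A$ and $B$ with $\chi(T_1[A]), \chi(T_1[B]) \ge \chi$ (e.g.\ take $A$ minimal with $\chi(T_1[A]) \ge \chi$, so $\chi(T_1[A]) = \chi$ and $\chi(T_1[B]) \ge 2\chi - \chi = \chi$). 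Now define $T$ to agree with $T_1$ inside $A$ and inside $B$, and with $T_2$ between $A$ and $B$. Then for $A' \subseteq A$ of the allowed size, $B \cap N_T^+(A') \subseteq N_{T_2}^+(A')$ while $T$ restricted to $B$ is $T_1[B]$, so $\chi(T[B \cap N_T^+(A')]) \le \chi(T_1[N_{T_2}^+(A')]) \le 2$, and symmetrically for $B' \subseteq B$; because $A$ and $B$ are disjoint parts of the same vertex set, no diagonal vertices ever enter the cross-neighbourhoods and the bound $2$ is immediate.
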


Finally, we include two results for the setting where chromatic number is replaced by degeneracy (or equivalently maximum average degree). Since every graph of high chromatic number has high degeneracy, \cref{thm:countex} shows that for every positive integer $d$ there is a graph $G$ and a tournament $T$ on the same vertex set such that the degeneracy of $G$ is at least $d$, but the subgraph of $G$ induced on each out-neighbourhood of $T$ is bipartite. Our next result strengthens this statement by ensuring that the graph induced on the out-neighbourhood is 1-degenerate.

\begin{proposition}\label{Prop:1}
    For every positive integer $k$, there is a $k$-regular graph $G$ and a tournament $T$ on the same vertex set such that $G[N_T^{+}(v)]$ is a forest for every vertex $v$.
\end{proposition}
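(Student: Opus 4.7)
The plan is to give an explicit circulant construction. Assume $k$ is even; the odd case will be a minor adjustment at the end. Take $V = \mathbb{Z}_n$ with $n = 2m + 1$ odd and $m \geq 2^{k/2}$, let $T$ be the rotational tournament defined by $i \to j \iff (j - i) \bmod n \in \{1, 2, \ldots, m\}$, and let $G$ be the Cayley graph on $V$ with connection set $\{\pm(m - 2^{\ell - 1}) : 1 \leq \ell \leq k/2\}$. Then $G$ is $k$-regular, and since both $G$ and $T$ are rotation-invariant, it suffices to show that the single induced subgraph $H := G[\{1, 2, \ldots, m\}] = G[N_T^+(0)]$ is a forest.

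The edges of $H$ are exactly the pairs $\{i, i + m - 2^{\ell - 1}\}$ with $1 \leq i \leq 2^{\ell - 1}$ and $1 \leq \ell \leq k/2$, so every edge is bipartite between $L := \{1, \ldots, 2^{k/2 - 1}\}$ and $R := \{m - 2^{k/2 - 1} + 1, \ldots, m\}$; the remaining vertices of $\{1, \ldots, m\}$ are isolated in $H$. Counting gives $\sum_{\ell = 1}^{k/2} 2^{\ell - 1} = 2^{k/2} - 1 = |L| + |R| - 1$ edges on the $2^{k/2}$ non-isolated vertices, so $H$ is a tree -- hence a forest -- as soon as it is connected.

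Proving connectivity is the main step. After the change of variable $c := m - j$ on $R$, the edges of $H$ correspond bijectively to pairs $(i, c) \in L \times \{0, 1, \ldots, 2^{k/2 - 1} - 1\}$ with $i + c$ a power of $2$ in $\{2^0, 2^1, \ldots, 2^{k/2 - 1}\}$, and I would induct on $k/2$. By the inductive hypothesis, the edges with $i + c \leq 2^{k/2 - 2}$ form a tree on the smaller square $\{1, \ldots, 2^{k/2 - 2}\} \cup \{0, \ldots, 2^{k/2 - 2} - 1\}$, while the new edges (those with $i + c = 2^{k/2 - 1}$) have the property that exactly one endpoint is ``new'' ($i > 2^{k/2 - 2}$ or $c \geq 2^{k/2 - 2}$, but never both, since both being new would force $i + c > 2^{k/2 - 1}$), so each new edge attaches exactly one new vertex as a pendant leaf to the inductive tree. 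This leaf-attachment observation is the main obstacle, but it reduces to a one-line arithmetic check. For odd $k$, take $n = 2m$ even, adjoin the self-inverse generator $m$ to the connection set (orienting each matching edge $\{v, v + m\}$ from smaller to larger index), and verify that this extra generator contributes no edges to any $G[N_T^+(v)]$; the even-$k$ analysis then carries over with $k - 1$ in place of $k$.
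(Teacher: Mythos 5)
Your construction works, and it is a genuinely different route from the paper's. The paper takes $G$ to be the $k$-dimensional hypercube on $2^k$ vertices and builds $T$ recursively by induction on the dimension; to make the induction close it proves the stronger statement that every \emph{closed} in- and out-neighbourhood induces a forest, and the inductive step doubles the tournament across the hypercube's canonical perfect matching, orienting the cross edges at a vertex of one copy towards the closed in-neighbourhood of its clone in the other copy. You instead give a one-shot explicit pair: a circulant graph on $\mathbb{Z}_{2m+1}$ with connection set $\{\pm(m-2^{\ell-1}) \colon 1\le \ell\le k/2\}$ together with the rotational tournament, so that by rotation-invariance only $G[\{1,\dots,m\}]$ needs to be examined; your list of its edges, the count $2^{k/2}-1$, and the induction on the number of generators are all correct. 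In the inductive step you justify that no edge with $i+c=2^{k/2-1}$ has both endpoints new; you should also record the equally short checks that it cannot have both endpoints old and that each new vertex lies on exactly one such edge, since the latter is what makes the attachment a genuine pendant leaf (alternatively, connectivity plus your edge count already gives a tree). Your approach buys a completely explicit, non-recursive description of both $G$ and $T$, a smaller vertex count (roughly $2^{k/2+1}$ rather than $2^k$), and out-neighbourhoods inducing a tree plus isolated vertices; the paper's buys the stronger closed in- and out-neighbourhood property and a cleaner self-similar structure. The one place needing real care in your write-up is the odd case: on $\mathbb{Z}_{2m}$ the rotational rule leaves the antipodal pairs unoriented, and once you orient them from smaller to larger the tournament is no longer rotation-invariant, so the reduction to the single vertex $0$ is no longer available; instead observe that every out-neighbourhood is a set of at most $m$ consecutive residues, that neither the generator $m$ nor any difference $m+2^{\ell-1}$ can occur between two vertices of such a set, and then use translation-invariance of $G$ alone to reduce to the even-case graph with $(k-1)/2$ generators. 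With these points spelled out, your proof of \cref{Prop:1} is complete.
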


Despite this result, and in contrast to \cref{thm:countex}, if $G$ has high degeneracy and $T$ is a tournament on the same vertex set, then there is a two-vertex set whose out-neighbourhood has high degeneracy.

\begin{theorem}\label{thm:avg_deg_2vtxs}
    For every positive integer $k$, every graph $G$ with degeneracy at least $12k$, and every tournament $T$ on the same vertex set, there exist vertices $x,y$ such that $G[N^{+}(\{x,y\})]$ has degeneracy at least $k - 1$. 
\end{theorem}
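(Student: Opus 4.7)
My plan is to first reduce to a subgraph with high minimum degree, then select the pair $(x,y)$ by a greedy tournament domination, and finally show that the induced subgraph on the joint out-neighbourhood has degeneracy at least $k-1$. Since $G$ has degeneracy at least $12k$, there is a subgraph $H\subseteq G$ with $\delta(H)\geq 12k$; fix one and write $m:=|V(H)|$. It then suffices to produce $x,y\in V(G)$ such that $H[N_T^+(\{x,y\})\cap V(H)]$ contains a subgraph of minimum degree at least $k-1$, as this is automatically a subgraph of $G[N_T^+(\{x,y\})]$.

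To select the pair, I apply the standard two-step greedy argument in the subtournament $T[V(H)]$: first pick $x\in V(H)$ with $|N_T^+(x)\cap V(H)|\geq\lceil(m-1)/2\rceil$ (which exists by averaging out-degrees), and then in $U:=V(H)\setminus(N_T^+(x)\cup\{x\})$ pick $y\in U$ with $|N_T^+(y)\cap U|\geq\lceil(|U|-1)/2\rceil$. Writing $W:=N_T^+(\{x,y\})\cap V(H)$ and $Z:=V(H)\setminus W$, this selection yields $|Z|\leq(m+5)/4$.

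To show $H[W]$ has degeneracy at least $k-1$, I first observe that $d_{H[W]}(v)\geq d_H(v)-|Z|\geq 12k-(m+5)/4$ for every $v\in W$, which is at least $k-1$ whenever $m\leq 44k$, finishing this case. When $m>44k$, the pointwise bound fails and I would pass to an edge count: $|E(H[W])|\geq |E(H)|-\sum_{z\in Z}d_H(z)\geq 6km-\sum_{z\in Z}d_H(z)$, targeting $|E(H[W])|>(k-2)|W|$, which by the standard peeling argument (any graph with more than $(k-2)|V|$ edges contains a subgraph of minimum degree at least $k-1$) gives the desired sub-subgraph. In the nearly-regular case $\Delta(H)=\cO(k)$ this is immediate, since $\sum_{z\in Z}d_H(z)=\cO(km)$ and the margin $6km-\cO(km)$ comfortably exceeds $(k-2)|W|\leq (k-2)m$.

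The main obstacle---and where I expect most of the proof's effort to go---is the irregular case, where $H$ has vertices of degree far exceeding $12k$ which an adversarial $T$ could concentrate inside $Z$, inflating $\sum_{z\in Z}d_H(z)$. I would address this either by refining the pair selection so that $\{x,y\}$ also dominates the high-degree vertices of $H$ (exploiting that their number is controlled via $\sum_vd_H(v)=2|E(H)|$), or by first pre-processing $H$ into a sub-subgraph retaining $\delta\geq 12k$ but with bounded maximum degree. The constant $12$ then emerges naturally from combining the $3/4$-vertex coverage of two-vertex tournament domination with the edge budget needed to retain a vertex of degree $k-1$ after losing up to a quarter of the vertex set.
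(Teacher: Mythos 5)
The case you defer to the end is the whole theorem, and neither of your proposed remedies works. Your greedy two-vertex domination controls only the \emph{number} of undominated vertices ($|Z|\lesssim m/4$), not their degrees, so once $m\gg k$ the quantity $\sum_{z\in Z}d_H(z)$ can swamp $e(H)\geq 6km$: an adversary designing $G$ and $T$ together can arrange that the specific pair your greedy rule selects leaves exactly the high-degree part of $H$ undominated. Remedy (a) --- choosing $x,y$ to also dominate the high-degree vertices --- fails because the number of vertices of degree $\geq Ck$ is only bounded by $2e(H)/(Ck)$, which is not bounded at all (it can be a constant fraction of $m$), and two vertices cannot dominate a prescribed large set in an adversarial tournament (tournament domination number grows logarithmically in the set size). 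Remedy (b) --- preprocessing to a subgraph with $\delta\geq 12k$, or even $\delta=\Omega(k)$, and maximum degree $\cO(k)$ --- is stated without any argument and is not a routine fact; finding almost-regular or bounded-degree subgraphs of prescribed minimum degree inside graphs of large minimum degree is a well-known hard problem in its own right, and nothing like the clean statement you need is available off the shelf. So as written the proof only covers $m=\cO(k)$.

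The paper's proof supplies precisely the two ingredients you are missing, and they are worth internalising. First, it passes to a \emph{bipartite} subgraph $H$ with $\delta(H)\geq 6k$ and sides $A,B$ with $|A|\geq|B|$; after picking $x\in A$ with $|N^+_{T[A]}[x]|\geq|A|/2$ and setting $A'=N^+_{T[A]}[x]$, the bipartite structure gives the pointwise edge count $e(H[A',B])\geq 6k|A'|$ regardless of how irregular $H$ is, because every neighbour of a vertex of $A'$ lies in $B$. Second, instead of a greedy choice of $y$, it uses the LP-duality (Fisher--Ryan) fact that every tournament carries a probability distribution giving each closed in-neighbourhood weight at least $1/2$; sampling $y\in B$ from this distribution for $T[B]$ makes every \emph{edge} of $H[A',B]$ survive into $H[A',N^+_{T[B]}[y]]$ with probability at least $1/2$, so in expectation $e\geq 3k|A'|\geq k(|A'|+|B|)$, and some $y$ achieves $e(H_2)\geq k|V(H_2)|$; deleting $x,y$ then leaves a subgraph of $G[N^+(\{x,y\})]$ with more than $(k-2)|V|$ edges. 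Counting edges and dominating fractionally is exactly what neutralises the irregular case that your vertex-counting greedy argument cannot handle; without that (or an equivalent idea) your proposal has a genuine gap.
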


\section{Proofs of the main theorems}

\noindent In this section we present the proof of \cref{thm:countex}. Our construction is based on the classical \emph{Schrijver graphs}~\cite{schrijver}.

\begin{definition}
    Let $k \ge 1$ and $n \ge 2k$ be integers. The \defn{Kneser graph} $\KG(n,k)$ is the graph whose vertex set is $\binom{[n]}{k}$ and in which two distinct sets $S_1, S_2 \in \binom{[n]}{k}$ are adjacent if and only if $S_1 \cap S_2 = \emptyset$. The \defn{Schrijver graph} $\SG(n,k)$ is the induced subgraph of $\KG(n,k)$ whose vertex set consists of all stable sets in $\binom{[n]}{k}$. Here, a set $S \in \binom{[n]}{k}$ is called \defn{stable} if it does not include two cyclically consecutive\footnote{By this we mean a pair $i, i + 1$ where $1 \le i < n$ or the pair $n, 1$.} elements of $[n]$.
\end{definition}

Kneser~\cite{kneser} conjectured that the chromatic number of $\KG(n, k)$ is $n - 2k + 2$. This conjecture remained open for two decades and was first proved by Lov\'{a}sz~\cite{lovasz} using homotopy theory (see also B\'{a}r\'{a}ny~\cite{barany} and Greene~\cite{greene} for very short proofs). Shortly afterwards, Schrijver~\cite{schrijver} introduced the graphs $\SG(n,k)$ and proved that $\SG(n, k)$ is vertex-critical with chromatic number $\chi(\SG(n, k)) = \chi(\KG(n, k)) = n - 2k + 2$.

To prove \cref{thm:countex}, we will show that for every integer $\chi \ge 3$ and every sufficiently large integer $k$ there exists a tournament $T$ on the same vertex set as $\SG(2k+\chi-2,k)$ such that for every $U \subseteq V(T)$ which is sufficiently small, the out-neighbourhood of $U$ in $T$ induces a bipartite subgraph of $\SG(2k+\chi-2,k)$. As $\chi(\SG(2k+\chi-2,k))=\chi$, this will prove \cref{thm:countex}. 

In constructing our tournament, we rely on the following combinatorial statement which follows directly from the existence of tournaments with high domination number. 

\begin{lemma}\label{lemma:function}
    For every positive integer $t$ there is some $n_0$ such that for all integers $n \geq n_0$ there exists a function $f \colon \binom{[n]}{t} \to 2^{[n]}$ with the following two properties:
    \begin{itemize}[noitemsep]
        \item for every $A, B \in \binom{[n]}{t}$, at least one of $A \cap f(B)$ and $B \cap f(A)$ is empty, and
        \item for every collection $(A_i)_{i \in I}$ of at most $\frac{\log{n}}{2t}$ sets from $\binom{[n]}{t}$,
        \begin{equation*}
            \bigcap_{i \in I} f(A_i) \neq \emptyset.
        \end{equation*}
    \end{itemize}
\end{lemma}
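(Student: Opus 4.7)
The plan is to define $f$ from a random tournament $T$ on $[n]$ and verify both properties via the probabilistic method. Explicitly, I would take
$$f(A) := \bigcap_{a \in A} N_T^+(a),$$
the set of vertices beaten in $T$ by every element of $A$. The first property then holds for any tournament $T$, regardless of randomness: if there were $a \in A \cap f(B)$ and $b \in B \cap f(A)$, then $a \in f(B)$ would force every vertex of $B$ to beat $a$ (so in particular $b \to a$), and symmetrically $a \to b$, contradicting that $T$ is a tournament.

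For the second property, observe that
$$\bigcap_{i \in I} f(A_i) = \bigcap_{a \in S} N_T^+(a), \qquad \text{where } S := \bigcup_{i \in I} A_i,$$
and $\size{S} \leq t\size{I} \leq (\log n)/2$. So it suffices to find a tournament on $[n]$ in which every $S$ of size at most $(\log n)/2$ has a common out-neighbour---equivalently, a tournament whose reverse has domination number greater than $(\log n)/2$. Taking $T$ uniformly at random, for a fixed $S$ with $\size{S} = s \leq (\log n)/2$ the probability that no vertex is beaten by every element of $S$ is at most $(1 - 2^{-s})^{n-s} \leq \exp\bigl(-(n-s)/\sqrt{n}\bigr)$. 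A union bound over all such $S$ bounds the failure probability by at most $(\log n) \cdot n^{(\log n)/2} \cdot \exp(-\Omega(\sqrt{n}))$, which tends to $0$, so a suitable $T$ exists for $n \geq n_0$.

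I do not anticipate any substantial obstacle here: the key observation is that defining $f(A)$ as a common out-neighbourhood furnishes the antisymmetry required by the first property for free, reducing the lemma to a standard random-tournament domination estimate (and so the hint in the text about tournaments with large domination number is accurate). The only care required is in the routine union-bound arithmetic, in particular ensuring that the bound on $\size{I}$ translates via $\size{S} \leq t\size{I}$ into a bound on $\size{S}$ small enough to beat the growth of $\binom{n}{s}$.
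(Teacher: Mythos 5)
Your proposal is correct and takes essentially the same approach as the paper: the paper defines $f(A)$ as the set of vertices dominating $A$ in a tournament in which every set of at most $\log(n)/2$ vertices is dominated by a vertex outside it, which is exactly your construction up to reversing all arcs, and the verification of both properties is identical. The only difference is that the paper cites Erd\H{o}s's classical result for the existence of such a tournament, whereas you reprove it via the standard random-tournament union bound.
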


\begin{proof}
    By a classical result of Erd\H{o}s~\cite{erdos} (see~\cite{graham} for an explicit construction), for every sufficiently large $n$ there is an $n$-vertex tournament in which every set of at most $\log(n)/2$ vertices is dominated by a vertex outside the set. Let $n$ be large enough that this result holds and that $\log(n)/2 \geq t$, and let $T$ be the corresponding tournament. Identify $V(T)$ with $[n]$ and, for $A \in \binom{[n]}{t}$, define $f(A)$ as
    \begin{equation*}
        f(A) \coloneqq \{v \in [n]\setminus A \colon v \text{ dominates } A\}.
    \end{equation*}
    We claim $f$ satisfies the two properties of the lemma statement. Firstly, let $A, B \in \binom{[n]}{t}$ and suppose for a contradiction that $A \cap f(B)$ and $B \cap f(A)$ are both non-empty. Then there is some $a \in A\setminus B$ that dominates $B$ and some $b \in B\setminus A$ that dominates $A$. This implies that $a$ and $b$ are distinct, and the edge between them is oriented in both directions, which is a contradiction. Next, let $(A_i)_{i \in I}$ be a collection of at most $\frac{\log{n}}{2t}$ sets from $\binom{[n]}{t}$. Let $A = \bigcup_{i \in I} A_i$ which is a set of size at most $\log(n)/2$. By the definition of $T$ some vertex $x\not\in A$ dominates $A$, but then $x\in \bigcap_{i \in I} f(A_i)$, as required.
\end{proof}

Before giving the proof of \cref{thm:countex}, let us fix the following notation: for a set $S \in \binom{[n]}{k}$, we denote by $\gap(S)$ the set of ``left-elements'' of cyclically consecutive pairs of $[n]$ that are disjoint from $S$. Concretely, $r \in \gap(S)$ if and only if $\{r, r + 1\} \cap S = \emptyset$, where addition is to be understood modulo $n$ (that is, $n + 1$ is identified with $1$). Pause to note that every stable set $S\subseteq [n]$ of size $k$ (that is, every vertex of the Schrijver graph $\SG(n,k)$) satisfies $\size{\gap(S)} = n - 2k$. Every $S \in \binom{[n]}{k}$ can be recovered from $\gap(S)$ and so $\size{V(\SG(n, k))} \leq \binom{n}{n - 2k}$. 

\begin{proof}[Proof of \cref{thm:countex}]
    The result is trivial for $\chi\leq 2$, so let $\chi \geq 3$ be an integer, $t \coloneqq \chi - 2$, and $n_0$ be as given by \cref{lemma:function}. Pick some positive integer $k > t$ such that $2k + t \ge n_0$, set $n \coloneqq 2k + t$, and set $G \coloneqq \SG(n, k)$. Note that $G$ is triangle-free, has chromatic number $\chi$ and, for any $S \in V(\SG(n, k))$, $\gap(S) \in \binom{[n]}{t}$. Hence, $N \coloneqq \size{V(\SG(n, k))} \leq \binom{n}{t} \leq n^t$.
    
    Let $f \colon \binom{[n]}{t} \to 2^{[n]}$ be the function from \cref{lemma:function}. Define a directed graph $D$ on the same vertex set as $G$ that has a directed edge from a vertex $S_1$ to a vertex $S_2$ if and only if $f(\gap(S_1)) \cap \gap(S_2) = \emptyset$. 
    Note, by the first property of $f$ guaranteed by \cref{lemma:function}, that any two distinct vertices of $D$ are connected by an arc in at least one of the two possible directions. Hence, there exists a spanning subdigraph $T$ of $D$ which is a tournament. 
    
    Let $U$ be any set of at most $\frac{\log{N}}{2 \chi^2} \leq \frac{\log{N}}{2 t^2} \leq \frac{\log{n}}{2t}$ vertices. To finish the proof we will show that the out-neighbourhood $N_D^+(U)$ induces a bipartite subgraph of $G$ (and hence the same is true for the out-neighbourhood $N_T^+(U) \subseteq N_D^+(U)$ in $T$). Write $U = \{S_1, \dotsc, S_{\size{U}}\}$. By the second property of $f$ guaranteed by \cref{lemma:function}, there is some $r \in [n]$ common to all the $f(\gap(S_i))$. By the definition of $D$, any $S \in N_D^+(U)$ satisfies $r \notin \gap(S)$ and so $S \cap \{r, r + 1\} \neq \emptyset$. Colouring all the vertices in the out-neighbourhood that include the element $r$ with one colour and all the remaining vertices (which necessarily contain $r + 1$) with another colour provides a proper $2$-colouring of $G[N_D^+(S)]$. This concludes the proof of the theorem.
\end{proof}

We can convert the graph $G$ from \cref{thm:countex} to a tournament: pick any linear order on the vertices of $G$ and construct a tournament $T_1$ whose back-edge graph is $G$. We will show that $\chi(G)$ and $\chi(T_1)$ are closely related, and thus prove \cref{cor:T1_T2}.

\begin{proof}[Proof of \cref{cor:T1_T2}]
    Let $K \coloneqq 2 \chi$ and $n$ be sufficiently large. By \cref{thm:countex} there is a triangle-free graph $G$ with chromatic number $K$ and a tournament $T$ on the same $N$-vertex set such that, for every set $U$ of at most $\frac{\log{N}}{8\chi^2}$ vertices, $\chi(G[N_T^+(U)])\le 2$. Let $(V(G), \prec)$ be a linear order and define a tournament $T_1$ with vertex set $V(G)$ as follows: there is an arc from vertex $u$ to vertex $v$ in $T_1$ if either $v \prec u$ and $uv \in E(G)$ or $u \prec v$ and $uv \notin E(G)$. We further set $T_2 \coloneqq T$ and claim that the pair $(T_1, T_2)$ of tournaments satisfies the statement of the corollary.

    Let $W \subseteq V(G)$ be any set of vertices where $T_1[W]$ is transitive. Note that if $v_1 v_2 v_3$ is a path in $G$ (so $v_1 v_3 \notin E(G)$ by triangle-freeness) and $v_1 \prec v_2 \prec v_3$, then $v_1 v_2 v_3$ is a cyclic triangle in $T_1$ and so $v_1$, $v_2$, $v_3$ are not all in $W$. In particular, the partition $W = W_1 \cup W_2$ where
    \begin{align*}
        W_1 & \coloneqq \{w \in W \colon \text{there is $w' \in W$ such that $w' \prec w$ and $w'w \in E(G)$}\}, \\
        W_2 & \coloneqq \{w \in W \colon \text{there is no $w' \in W$ such that $w' \prec w$ and $w'w \in E(G)$}\},
    \end{align*}
    gives a proper 2-colouring of the vertices of $G[W]$. Since this holds for any $W$ where $T_1[W]$ is transitive, we have $\chi(T_1) \geq \chi(G)/2 = \chi$.
    
    To finish the proof, consider any set $U$ of at most $\frac{\log{N}}{8 \chi^2}=\frac{\log N}{2K^2}$ vertices. Note that $G[N_T^+(U)] = G[N_{T_2}^+(U)]$ is bipartite. Let $I_1$, $I_2$ be two disjoint independent sets in $G$ such that $I_1 \cup I_2 = N_{T_2}^+(U)$. Now consider any two vertices $u, v \in I_j$ for some $j \in \{1,2\}$ and note that since $uv \notin E(G)$, there is an arc from $u$ to $v$ in $T_1$ if and only if $u \prec v$. Hence $T_1[I_1]$ and $T_1[I_2]$ are transitive tournaments and so $\chi(T_1[N_{T_2}^+(U)]) \le 2$. 
\end{proof}

To prove \cref{cor:bipartite}, we can now take the two tournaments $T_1$ and $T_2$ from \cref{cor:T1_T2} and combine them appropriately: we simply orient the edges within $A$ and $B$ according to $T_1$, and the edges between $A$ and $B$ according to $T_2$.

\begin{proof}[Proof of \cref{cor:bipartite}]
    Let $\chi$ be a positive integer. By \cref{cor:T1_T2}, for arbitrarily large $N$ there exist tournaments $T_1$ and $T_2$ on the same $N$-vertex set $V$ with $\chi(T_1) = 2\chi$ and $\chi(T_1[N^{+}_{T_2}(U)]) \leq 2$ for every $U \subseteq V$ of size at most $\frac{\log{N}}{32 \chi^2}$. Partition $V$ into sets $A$ and $B$ such that $\chi(T_1[A]),\chi(T_1[B])\geq \chi$, then construct a new tournament $T$ on $V$ by orienting the edge between $u,v\in V$ to agree with $T_1$ if $u,v\in A$ or $u,v\in B$, and orienting it to agree with $T_2$ otherwise. It is not difficult to see that $T$ satisfies the conditions of the corollary.
\end{proof}

\section{Degeneracy}\label{sec:degen}

\noindent In this section we consider the setting in which degeneracy replaces chromatic number. We first show that there is a tournament on the vertex set of the $k$-dimensional hypercube such that each out-neighbourhood induces a forest in the hypercube, proving \cref{Prop:1}. Therefore, having high degeneracy does not imply that some out-neighbourhood has high degeneracy.

\begin{proof}[Proof of \cref{Prop:1}]
    For each $k$, let $G_k$ be the hypercube on $2^{k}$ vertices. We will actually prove something stronger than \cref{Prop:1}, namely that the \emph{closed} in- and out-neighbourhoods\footnote{The \defn{closed in-neighbourhood} of a vertex $v$ in tournament $T$ is $N_T^-[v] \coloneqq \{v\} \cup N_T^-(v)$. The closed out-neighbourhood is defined analogously.} $G_k[N_T^{-}[v]]$ and $G_k[N_T^{+}[v]]$ are both forests for every vertex $v\in V(G_k)$. We proceed by induction on $k$. For $k=1$ the result is immediate, so given $k\geq 1$ let $T_k$ be a tournament on $V(G_k)$ with the desired property. We will view $G_{k+1}$ as the union of two copies of $G_k$, say $G_k^{1}$ and $G_{k}^{2}$, connected via the matching consisting of all edges of the form $x^{1}x^{2}$, where $x^{1}\in V(G_k^{1})$ and $x^2\in V(G_{k}^{2})$ denote the copies of a vertex $x \in V(G_k)$. For each $S\subseteq V(G_k)$, we will write $S^{(1)}$ and $S^{(2)}$ for the corresponding sets of vertices in $G_k^{1}$ and $G_{k}^{2}$ respectively.
    
    Now define a tournament $T_{k+1}$ on vertex set $V(G_{k+1})$ as follows. First orient the edges within each of $V(G_k^1)$ and $V(G_k^2)$ according to $T_k$, in the canonical way. Then for each $x\in V(G_k)$, orient every edge between $x^1$ and ${N_{T_k}^{-}[x]}^{(2)}$ away from $x^1$ and every edge between $x^1$ and ${N_{T_k}^{+}(x)}^{(2)}$ towards $x^1$. This completes the construction of $T_{k+1}$. Observe that for each $x\in V(G_k)$, the edges between $x^2$ and ${N_{T_k}^{-}(x)}^{(2)}$ are oriented away from $x^2$ and the edges between $x^2$ and ${N_{T_k}^{+}[x]}^{(2)}$ are oriented towards $x^2$.
    
    Let $x\in V(G_k)$ and note that $N_{T_{k+1}}^+[x^1]={N_{T_k}^+[x]}^{(1)}\cup {N_{T_k}^{-}[x]}^{(2)}$. By the induction hypothesis, $N_{T_k}^+[x]$ and $N_{T_k}^{-}[x]$ both induce forests in $G_{k}$, so ${N_{T_k}^+[x]}^{(1)}$ and ${N_{T_k}^{-}[x]}^{(2)}$ do the same in $G_{k+1}$. Since there is exactly one edge in $G_{k+1}$ between these two sets, namely $x^1x^2$, the graph $G_{k+1}[N_{T_{k+1}}^+[x^1]]$ is acyclic. Analogous arguments show that $G_{k+1}[N_{T_{k+1}}^{-}[x^1]]$, $G_{k+1}[N_{T_{k+1}}^{+}[x^2]]$, and $G_{k+1}[N_{T_{k+1}}^{-}[x^2]]$ are all acyclic too. Since every vertex of $G_{k+1}$ is of the form $x^1$ or $x^2$ for some $x\in V(G_k)$, this completes the proof.
\end{proof}

However, we will now show that, unlike with chromatic number, having high degeneracy implies that there are two vertices $x$ and $y$ such that the out-neighbourhood of $\{x, y\}$ has high degeneracy.

\begin{proof}[Proof of \cref{thm:avg_deg_2vtxs}]
     Let $H$ be a bipartite subgraph of $G$ with $\delta(H) \geq 6 k$ and let $A \cup B$ be a bipartition of $H$ with $\size{A} \geq \size{B}$. Define $T_1 = T[A]$ and $T_2 = T[B]$. 
     Pick $x \in A$ satisfying $\size{N_{T_1}^{+}[x]} \geq \size{A}/2$ and define $A' = N_{T_1}^{+}[x]$. 
     Now let $H_1 = H[A', B]$. It can be shown using linear programming duality that every tournament has a probability distribution on its vertex set which assigns weight at least $1/2$ to every closed in-neighbourhood (see \cite[Sec.~1.2]{fisherryan}).
     Let $w$ be such a probability distribution for $T_2$. 
     Take a random vertex $y \in B$ according to $w$ and note that $\prob(u \in N^+_{T_2}[y]) \geq 1/2$ for every $u\in B$. Let $H_2 = H_1[A', N_{T_2}^{+}[y]]$ so that for every $e \in E(H_1)$, $\prob[e \in E(H_2)] \geq 1/2$.
     We have $\expe[e(H_2)] \geq e(H_1)/2 \geq 3k \size{A'} \geq k(\size{A'} + \size{B})$, from which it follows, since $\size{N_{T_2}^{+}[y]} \leq \size{B}$, that there exists $y\in B$ such that $e(H_2) \geq k \size{V(H_2)}$. Removing $x$ and $y$ from $H_2$, we obtain a subgraph $G'$ of $G[N^+_T(\{x, y\})]$ with $e(G') \geq (k - 2) \size{V(G')}$. Thus $G'$, and therefore also $G[N^+(\{x, y\})]$, has degeneracy greater than $k - 2$.
\end{proof}

\section{Fractional chromatic number}\label{sec:frac_chrom}

\noindent We remind the reader that a graph $G$ has fractional chromatic number $\chif(G)\leq r$ if and only if there is a probability distribution on the independent sets of $G$ such that the random independent set $\boldsymbol{I}$ obtained and every vertex $v$ satisfy $\prob(v \in \boldsymbol{I}) \geq 1/r$. In this section we demonstrate that the modified version of \cref{conj:SS} in which chromatic number is replaced by fractional chromatic number is true, as observed by Scott and Seymour~\cite{barbados} without proof.

\begin{theorem}\label{thm:frac}
    For $c\geq 1$, let $G$ be a graph and $T$ be a tournament on the same vertex set such that $\chif(G[N_T^{+}(v)]) \leq c$ for every vertex $v$. Then $\chif(G) \leq 2(c + 1)$.
\end{theorem}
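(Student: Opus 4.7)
The plan is to construct an explicit probability distribution on independent sets of $G$ witnessing $\chif(G) \le 2(c+1)$, by combining two pieces of data: a ``semi-king'' distribution for the tournament $T$, and the given fractional colourings of its out-neighbourhoods.

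First, I would invoke the linear-programming duality result (Fisher--Ryan) that is already used in the proof of \cref{thm:avg_deg_2vtxs}, applied to the whole tournament $T$, to obtain a probability distribution $w$ on $V(T)$ with $w(N_T^-[v]) \ge 1/2$ for every vertex $v$; equivalently, a random vertex $\boldsymbol{y} \sim w$ satisfies $\prob(v \in N_T^+[\boldsymbol{y}]) \ge 1/2$. Second, the hypothesis $\chif(G[N_T^+(y)]) \le c$ furnishes, for every $y \in V(G)$, a probability distribution $\mu_y$ on independent sets of $G[N_T^+(y)]$ such that every $u \in N_T^+(y)$ is hit with probability at least $1/c$ (when $N_T^+(y) = \emptyset$, I just take $\mu_y$ to be the point mass on $\emptyset$).

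I would then define a random independent set $\boldsymbol{I}$ of $G$ in two stages: sample $\boldsymbol{y} \sim w$, and then with probability $\tfrac{1}{c+1}$ set $\boldsymbol{I} = \{\boldsymbol{y}\}$, while with the remaining probability $\tfrac{c}{c+1}$ draw $\boldsymbol{I}$ from $\mu_{\boldsymbol{y}}$. In either case $\boldsymbol{I}$ is independent in $G$. For a fixed vertex $v$, the singleton branch contributes exactly $\tfrac{1}{c+1}\, w(v)$ to $\prob(v \in \boldsymbol{I})$, while the $\mu_{\boldsymbol{y}}$-branch contributes at least $\tfrac{c}{c+1} \cdot \tfrac{1}{c} \cdot w(N_T^-(v)) = \tfrac{1}{c+1}\, w(N_T^-(v))$, because $v \in N_T^+(y)$ iff $y \in N_T^-(v)$. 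Summing,
\begin{equation*}
    \prob(v \in \boldsymbol{I}) \ge \frac{1}{c+1} \bigl(w(v) + w(N_T^-(v))\bigr) = \frac{w(N_T^-[v])}{c+1} \ge \frac{1}{2(c+1)},
\end{equation*}
which gives $\chif(G) \le 2(c+1)$.

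The only mildly delicate point is calibrating the mixing weight $\tfrac{1}{c+1}$: it is exactly the value that makes the singleton-branch absorb the ``$y = v$'' term $w(v)$ at the same rate as the $\mu_{\boldsymbol{y}}$-branch absorbs the open in-neighbourhood term $w(N_T^-(v))$, so that the full closed in-neighbourhood bound $w(N_T^-[v]) \ge 1/2$ can be applied. Beyond this, the argument is routine given Fisher--Ryan, so I do not anticipate a genuine obstacle.
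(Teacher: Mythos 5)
Your proposal is correct and is essentially identical to the paper's proof: both sample a vertex from the Fisher--Ryan distribution giving every closed in-neighbourhood weight at least $1/2$, then mix the singleton with a fractional-colouring independent set of its out-neighbourhood using the same weights $\tfrac{1}{c+1}$ and $\tfrac{c}{c+1}$. Your write-up just spells out the probability calculation a little more explicitly than the paper does.
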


\begin{proof}
    Let $w$ be a probability distribution on the vertex set of $T$ that assigns weight at least $1/2$ to every closed in-neighbourhood. For each vertex $v$, since $\chif(G[N_T^+(v)]) \leq c$, there is a random independent set $\boldsymbol{I}_v$ of $G[N_T^+(v)]$ such that $\prob(u \in \boldsymbol{I}_v) \geq 1/c$ for every $u \in N_T^+(v)$.

    We sample a random independent set $\boldsymbol{I}$ of $G$ as follows. First pick a vertex $\boldsymbol{v}$ according to $w$. Then with probability $1/(c + 1)$ take $\boldsymbol{I} = \{\boldsymbol{v}\}$ and with probability $c/(c + 1)$ take $\boldsymbol{I} = \boldsymbol{I}_{\boldsymbol{v}}$. Note that, for any vertex $u$, if $\boldsymbol{v} \in N^-[u]$, then $u \in \boldsymbol{I}$ with probability at least $1/(c + 1)$. Hence, by the defining property of $w$, $\prob(u \in \boldsymbol{I}) \geq 1/(2c + 2)$ and so $\chif(G) \leq 2(c + 1)$.
\end{proof}

\section{Closing remarks}

\noindent We have been unable to determine whether high chromatic number forces an out-neighbourhood with high degeneracy, and we would be interested to know if this is the case.

\begin{question}
    Does there exist, for each integer $d$, an integer $\chi$ such that for every graph $G$ with $\chi(G)\geq \chi$ and every tournament $T$ on the same vertex set, there is a vertex $v$ for which $G[N^{+}_T(v)]$ has degeneracy at least $d$?
\end{question}

We do, however, suspect that this is true for $d = 2$, that is, it should be possible to force some out-neighbourhood to contain a cycle.

\begin{conjecture}
    For every graph $G$ with sufficiently large chromatic number, and every tournament $T$ on the same vertex set, there exists a vertex $v$ such that $G[N^{+}_T(v)]$ contains a cycle.
\end{conjecture}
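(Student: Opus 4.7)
My approach is to prove the contrapositive: assuming that $G[N_T^{+}(v)]$ is a forest for every vertex $v$, show that $\chi(G)$ is bounded above by an absolute constant. The hypothesis admits a clean reformulation: $G[N_T^{+}(v)]$ is a forest for every $v$ if and only if every cycle $C$ of $G$ is an \emph{out-dominating set} of $T$, meaning that for every $v \in V(G) \setminus V(C)$ there exists $w \in V(C)$ with $w \to v$ in $T$. Indeed, a cycle contained in $N_T^{+}(v)$ is exactly a cycle of $G$ that $v$ dominates from outside.

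This reformulation immediately yields the conjecture when $T$ is transitive: the vertex $v^{*}$ of in-degree $0$ in $T$ lies in every out-dominating set of $T$ and hence in every cycle of $G$, so $G - v^{*}$ is a forest and $\chi(G) \le 3$. More generally, if $S_{1}$ denotes the top strongly connected component of $T$, then every cycle of $G$ meets $S_{1}$ in a subset that dominates $T[S_{1}]$ (because vertices in lower SCCs cannot send arcs into $S_{1}$). Consequently $G - S_{1}$ is a forest and $\chi(G) \le \chi(G[S_{1}]) + 2$, which reduces the conjecture to the case where $T$ is strongly connected.

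For strongly connected $T$, I would combine the reformulation with \cref{thm:avg_deg_2vtxs}. If $\chi(G)$ is sufficiently large, then $G$ has very large degeneracy, so there is a pair $\{x, y\}$ such that $H \coloneqq G[N_T^{+}(\{x, y\})]$ has high degeneracy. From such an $H$ one can extract many vertex-disjoint short cycles, each of which is a cycle of $G$ and hence, by the reformulation, an out-dominating set of $T$. I would then aim for a contradiction by proving an upper bound on the number of pairwise disjoint small out-dominating sets in a strongly connected tournament---most plausibly via a fractional domination / LP-duality argument in the spirit of the proof of \cref{thm:frac}, or by a double count against a vertex of $T$ of small in-degree (noting that $k$ disjoint dominating sets force every vertex to have in-degree at least $k-1$).

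The main obstacle is this final counting step. \Cref{thm:avg_deg_2vtxs} only yields a two-vertex out-neighbourhood, whereas the hypothesis and its reformulation concern single vertices, and the loss incurred in this passage mirrors precisely the failure of the fractional-to-chromatic reduction highlighted by \cref{thm:countex}. A more direct route might bypass \cref{thm:avg_deg_2vtxs} altogether: extract many vertex-disjoint short cycles from a $k$-critical subgraph of $G$ (whose minimum degree $\ge k-1$ forces many short cycles) and argue that the resulting family of out-dominating sets is incompatible with $T$ being a tournament. In either case I expect the proof to require a genuinely new structural lemma that leverages the acyclicity of forests rather than just their bipartiteness, since the analogous statement with ``bipartite'' in place of ``contains a cycle'' is false by \cref{thm:countex}.
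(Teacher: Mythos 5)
You should first be aware that this statement is one of the open conjectures in the closing remarks: the paper offers no proof of it, so there is no argument to compare yours against, and what you have written is (as you yourself acknowledge) a plan with the decisive step missing rather than a proof. The parts you do establish are correct but easy: the reformulation (every $G[N_T^{+}(v)]$ is a forest if and only if, for every cycle $C$ of $G$, every vertex outside $V(C)$ receives an arc from $V(C)$), the transitive case, and the reduction $\chi(G)\le\chi(G[S_1])+2$ to the top strong component $S_1$ of $T$. All of the difficulty lives in the strongly connected case.

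The genuine gap is your final counting step, and the paper's own \cref{Prop:1} shows that no lemma of the kind you are hoping for exists, so every variant of your plan that uses only the degeneracy (or minimum degree) of $G$ is dead on arrival. In that proposition $G$ is the $k$-dimensional hypercube, which is $k$-regular and decomposes into $2^{k-2}$ pairwise vertex-disjoint $4$-cycles, and $T$ is a tournament in which every out-neighbourhood induces a forest; by your reformulation, each of those $4$-cycles is an out-dominating set of $T$. Hence a tournament on $n$ vertices can carry $n/4$ pairwise disjoint dominating sets of size $4$ (and this persists after passing to the top strong component, since each cycle meets $S_1$ in a set dominating $T[S_1]$), so there is no upper bound on the number of disjoint small out-dominating sets to play the family of cycles off against. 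Your in-degree count confirms this quantitatively: $k$ disjoint dominating sets only force minimum in-degree at least $k-1$, and since some vertex has in-degree at most $(n-1)/2$ this gives $k\le(n+1)/2$, weaker than the trivial bound $n/3$ on disjoint cycles. The route through \cref{thm:avg_deg_2vtxs} stalls for the same reason: a two-vertex set whose out-neighbourhood has large degeneracy contradicts nothing in the hypothesis, and the cycles it yields need not lie in a single out-neighbourhood. Any successful argument must exploit that $\chi(G)$, and not merely the degeneracy of $G$, is large --- which is precisely the distinction the authors highlight (their first open question asks whether large $\chi$ even forces one out-neighbourhood of large degeneracy) and the reason they leave this as a conjecture.
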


We have shown that for certain very structured tournaments $T$ there are graphs on the same vertex set with large chromatic number, in which every out-neighbourhood of $T$ induces a bipartite subgraph. We conjecture that (with high probability) we cannot replace $T$ with a random tournament.

\begin{conjecture}
    For every positive integer $k$, there exists a $\chi$ such that if $T$ is the uniformly random tournament on vertex set $[N]$, then with high probability \textup{(}as $N\to\infty$\textup{)}, for every graph $G$ on $[N]$ with $\chi(G)\geq \chi$ there is a vertex $v\in[N]$ for which $G[N^+_T(v)]\geq k$.
\end{conjecture}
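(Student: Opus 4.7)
The plan is to exploit the sharp difference between the structured tournament underlying our Schrijver-based construction in \Cref{thm:countex} and a uniformly random tournament $T$, whose out-neighbourhoods behave quasirandomly and admit no low-complexity ``covering set'' aligned with an adversary's graph.

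First, Chernoff bounds and a union bound over small vertex subsets show that with high probability $T$ satisfies a deterministic quasirandomness property $P(T)$: every out-neighbourhood $N_T^+(v)$ has size $(1/2 + o(1))N$, and more generally the out-neighbourhoods of small vertex sets mutually intersect like independent uniform half-subsets of $V(T)$ up to lower-order error. Crucially, $P(T)$ is a property of $T$ alone, not of any graph $G$.

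The heart of the argument would be a deterministic chromatic-stability principle: there exists $\chi_0 = \chi_0(k)$ such that whenever $T$ satisfies $P(T)$ and $G$ is any graph on $V(T)$ with $\chi(G) \ge \chi_0$, some vertex $v$ satisfies $\chi(G[N_T^+(v)]) \ge k$. Proving this deterministically bypasses the fatal obstacle that there are $2^{\Omega(N^2)}$ graphs on $[N]$, so no direct union bound over $G$ is available. Combined with the first step, this would yield the conjecture.

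The main difficulty is this deterministic step. An arbitrary half-subset $S$ of $V(G)$ can easily collapse $\chi(G[S])$ (simply delete the largest colour class), so the proof must genuinely use the quasirandomness of $N_T^+(v)$. A first attempt is to pass to a $\chi_0$-critical subgraph $G' \subseteq G$ of minimum degree $\ge \chi_0 - 1$ and use Chernoff to show that $G'[N_T^+(v)]$ retains large average degree for a typical $v$; but \Cref{thm:countex}, together with the discussion of degeneracy in \Cref{sec:degen}, already warns that large degeneracy alone does not force large chromatic number. One would therefore need an additional ingredient upgrading pseudorandom degeneracy to genuine chromatic number---for instance, isolating a homomorphism-closed ``core'' in $G$ in the spirit of the Borsuk--Ulam proofs of $\chi(\SG(n,k)) = n - 2k + 2$, and arguing that such a core must be robust under pseudorandom pruning. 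Turning this heuristic into a rigorous argument, uniformly over all adversarial choices of $G$, is the crux of the problem.
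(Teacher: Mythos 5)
This statement is one of the open conjectures in the closing remarks of the paper: the authors give no proof of it, and your proposal does not supply one either. What you have written is a plan whose decisive step --- the ``deterministic chromatic-stability principle'' that a fixed quasirandomness property $P(T)$ together with $\chi(G)\ge\chi_0(k)$ forces some $v$ with $\chi(G[N_T^+(v)])\ge k$ --- is exactly the content of the conjecture, and you explicitly leave it unproved. The first step (Chernoff plus a union bound showing that a random tournament satisfies intersection statistics of out-neighbourhoods of small sets) is routine and fine, and your observation that one cannot union-bound over the $2^{\Omega(N^2)}$ choices of $G$ is correct and is precisely why some deterministic property of $T$ must carry the whole argument. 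But no candidate property is shown to suffice, and no mechanism is offered for converting it into a lower bound on the chromatic number of an out-neighbourhood; your own discussion concedes that the natural attempt via critical subgraphs and degeneracy fails, for the reasons illustrated by \cref{Prop:1} and the remarks in \cref{sec:degen}.

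A further concrete worry about the plan itself: it is not clear that the property $P(T)$ you propose (out-neighbourhoods of small sets intersecting like independent half-subsets) is the right dividing line. The construction in \cref{thm:countex} is built from a tournament with large domination number --- a property that random tournaments also have --- and the bipartiteness of out-neighbourhoods there comes from the interaction between the tournament and the Schrijver graph, not from a failure of simple intersection statistics alone. So even granting your step one, you would still need to identify which feature of a random tournament is genuinely incompatible with such adversarial alignments, uniformly over all graphs $G$ of large chromatic number; nothing in the proposal addresses this, and the topological ``core robustness'' suggestion is only a heuristic. As it stands, the proposal restates the difficulty rather than resolving it, so there is a genuine gap: the statement remains a conjecture both in the paper and after your argument.
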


Finally, as remarked after the statement of \cref{thm:countex}, if $\chi(G) \geq \chi$, then there is a collection of at most $\ceil{\log_2(N)/\floor{\chi/2 - 1}}$ out-neighbourhoods whose union induces a subgraph of chromatic number at least 3. It would be interesting to know if $o(\log(N)/\chi)$ (as $\chi\to \infty$) out-neighbourhoods suffice here. In particular, we conjecture the following.

\begin{conjecture}
    There exists $f(N)$ satisfying $f(N)=o(\log N)$ such that for every $N$-vertex graph $G$ with $\chi(G)\geq f(N)$, and every tournament $T$ on the same vertex set, there is a vertex $v$ for which $\chi(G[N^+_T(v)])\geq 3$.
\end{conjecture}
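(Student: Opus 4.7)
The plan is to prove the contrapositive: assume that every single-vertex out-neighbourhood $G[N_T^+(v)]$ has chromatic number at most $2$, and try to derive a bound of the form $\chi(G)\leq f(N)$ with $f(N)=o(\log N)$. A greedy peeling argument already gives the baseline bound $\chi(G)\leq O(\log N)$. Indeed, at each step pick a vertex $v_i$ of maximum out-degree in the tournament induced on the remaining vertex set $R_i$; then $G[N_T^+(v_i)\cap R_i]$ is a subgraph of the bipartite graph $G[N_T^+(v_i)]$ and so can be 2-coloured with fresh colours, while $|R_{i+1}|$ shrinks by at least a factor of two. Iterating for $\log_2 N$ rounds and giving each centre $v_i$ a private colour yields $\chi(G)\leq 3\log_2 N+O(1)$. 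The real task is to replace this $\log N$ by something asymptotically smaller.

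The first avenue I would pursue is to reuse colours across rounds by exploiting the bipartitions themselves. For each $i$, fix a bipartition $(A_{v_i}, B_{v_i})$ of $G[N_T^+(v_i)]$; the hope is that after a cleverly chosen orientation of the labels $A$ versus $B$, the union $\bigcup_i A_{v_i}$ remains independent in $G$, so that the two colours used at each step can be identified globally into a single pair. The natural tool is a random choice of orientations combined with a Lov\'{a}sz Local Lemma or entropy-compression argument to avoid the bad events that two vertices in the same global side are joined by an edge of $G$. A complementary approach is to sharpen the fractional-to-integer conversion: \cref{thm:frac} already gives $\chi_f(G)\leq 6$, and the probability distribution on independent sets produced in its proof has very restricted support (singletons and one side of the bipartition of an out-neighbourhood). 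One may hope to derandomise this distribution directly, selecting a small collection of centres $v_1,\dotsc,v_k$ together with bipartition orientations whose associated independent sets cover $V(G)$ with $k=o(\log N)$.

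The main obstacle is the familiar $\Theta(\log N)$ gap between chromatic number and fractional chromatic number, which is already tight for typical graphs with $\chi_f$ bounded. Any improvement must use structure beyond \cref{thm:frac}; and although the bipartite-out-neighbourhood hypothesis is strictly stronger than $\chi_f\leq 6$, converting the extra information into a super-constant saving is precisely the content of the conjecture. A useful benchmark is that the Schrijver-graph constructions of \cref{thm:countex} already force $f(N)=\Omega(\sqrt{\log N})$, so the target lies in the window $[\sqrt{\log N},\log N)$, and the decisive conceptual difficulty is to find an argument that is honestly better than greedy peeling while still only assuming bipartiteness of single-vertex out-neighbourhoods.
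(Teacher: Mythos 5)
This statement is one of the open conjectures in the paper's closing remarks; the authors give no proof of it, so there is nothing to compare your argument against, and a complete argument here would be new research rather than a reconstruction. Your proposal, to its credit, is candid about this, but as it stands it is a research plan, not a proof: the only step you actually carry out is the greedy peeling bound $\chi(G)\le 3\log_2 N+O(1)$, which is just the dominating-set observation already made after \cref{thm:countex}, and everything that would improve $\log N$ to $o(\log N)$ is left as a hope.

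Two of the proposed routes have concrete problems beyond being unfinished. For the ``globally align the bipartitions'' idea, the bad events (an edge of $G$ joining two vertices placed on the same global side by different out-neighbourhoods) have no useful dependency bound: a vertex can lie in linearly many out-neighbourhoods and the edges of $G$ between distinct $N_T^+(v_i)$'s are completely unconstrained, so neither the Lov\'{a}sz Local Lemma nor entropy compression has the local structure it needs; moreover, even if both global sides could be made independent, the $\Theta(\log N)$ centres $v_1,\dotsc,v_{\log_2 N}$ are an arbitrary vertex subset of $G$ and may themselves require $\Theta(\log N)$ colours, so the bound would not improve without a separate idea for colouring them, which the proposal does not supply. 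For the second route, sharpening or derandomising \cref{thm:frac} cannot suffice on its own: there are graphs with $\chi_f\le 3$ and $\chi=\Theta(\log N)$ (for example Kneser graphs $\KG(3k,k)$), so any argument that only uses the conclusion $\chi_f(G)\le 2(c+1)$, however cleverly it converts fractional to integer colourings, is blocked; the bipartite-out-neighbourhood hypothesis must enter in an essentially new way, and identifying how is exactly the content of the conjecture. Your benchmark observation that \cref{thm:countex} forces $f(N)=\Omega(\sqrt{\log N})$ for infinitely many $N$ is correct and worth keeping, but the gap between $\sqrt{\log N}$ and $\log N$ remains untouched by the proposal.
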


\subsection*{Acknowledgements} We would like to thank Sang-il Oum, Alex Scott, David Wood, and Liana Yepremyan for organising the April 2023 \href{https://www.matrix-inst.org.au/events/structural-graph-theory-downunder-iii/}{MATRIX-IBS Structural Graph Theory Downunder III} workshop where we began this work. Our thanks to Paul Seymour for helpful comments on the paper.

{
\fontsize{10pt}{11pt}
\selectfont

\setlength{\parskip}{2pt plus 0.3ex minus 0.3ex}
	
\bibliographystyle{bib.bst}
\bibliography{refs.bib}
}

\end{document}